\newtheorem{theorem}{Theorem}[section]
\newtheorem{open question}[theorem]{Open Question}
\theoremstyle{definition}
\newtheorem{definition}[theorem]{Definition}
\newtheorem{example}[theorem]{Example}
\newtheorem{remark}[theorem]{Remark}
\numberwithin{equation}{section}
\def\DJ{\leavevmode\setbox0=\hbox{D}\kern0pt\rlap
 {\kern.04em\raise.188\ht0\hbox{-}}D}
\begin{document}
\title[On some enriched contractions in Banach spaces]{On some enriched contractions in Banach spaces}

\author[P.\ Mondal, H.\ Garai, L.K. \ Dey]
{Pratikshan Mondal$^{1}$, Hiranmoy Garai$^{2}$, Lakshmi Kanta Dey$^{3}$.}

\address{{$^{1}$\,} Department of Mathematics,                                         					\newline \indent Durgapur Government College, 
                    Durgapur, India.}
                    \email{real.analysis77@gmail.com}
\address{{$^{2}$\,} Department of Mathematics,
                    \newline \indent National Institute of Technology
                    Durgapur, India.}
                    \email{hiran.garai24@gmail.com}
\address{{$^{3}$\,} Department of Mathematics,
                   \newline \indent National Institute of Technology
                    Durgapur, India.}
                    \email{lakshmikdey@yahoo.co.in}
\subjclass[2010]{$47$H$05$, $47$H$10$, $54$H$25$.}
\begin{abstract}
In this paper, we introduce two new types of enriched contractions, viz., enriched $\mathcal{A}$-contraction and enriched $\mathcal{A}'$-contraction. Then we obtain fixed points of mappings satisfying such contractions using the fixed point property of the average operator of the mappings. Further, we study the well-posedness and limit shadowing property of the fixed point problem involving the contractions, and give some examples to validate the results proved. We frame an open question related to the existence of a fixed point of such contractions. We also show that Berinde and  P\u{a}curar's recent results on different kinds  enriched contractions and  some well known classical fixed point results are particular cases of our results.

\textbf{Keywords:} Banach spaces; enriched $\mathcal{A}$-contraction; enriched $\mathcal{A}'$-contraction; well-posedness; limit shadowing property.

\end{abstract}

\maketitle

\section{Introduction and preliminaries}\label{sec:1}
Fixed point theory is one of the most rapidly growing branch of mathematics due to its applicability in different areas of pure and applied mathematics. Several researchers are engaged in this field to study fixed point of a self-map by generalizing the underlying  space as well as the contraction condition. One of the main focus of this study is to find a set of sufficient condition(s) to ensure the existence of a fixed point which is the self image of the map under consideration.  To reach such conditions, several mathematicians have introduced a number of contractions  and used these in several structures to obtain fixed point, common fixed point, coincidence point of a map. Some of such contractions are introduced recently and looks very interesting to study. One such contraction is enriched contraction, which is introduced very recently by Berinde and  P\u{a}curar, see \cite{BP20}.  
We first recall the definition of enriched contraction.
\begin{definition} {\bf (\cite[p. 2, Definition 2.1]{BP20}).}\label{d1} 
Let $(X, \|\cdot\|)$ be a normed linear space. A self map $T$ on $X$ is called an enriched contraction if there exist $b\in [0, \infty)$ and $\theta\in [0, b+1)$ such that
$$\|b(u-v)+Tu-Tv\|\le \theta \|u-v\|$$
for all $u, v\in X$. In this case the mapping $T$ is called a $(b, \theta)$-enriched contraction. The class of enriched contractions contains the Picard-Banach contractions as well as some non-expensive mappings, e.g., a Picard-Banach mapping with constant $k$ is a $(0, k)$-enriched contraction.
\end{definition}
Berinde and  P\u{a}curar showed that an enriched contraction  $T$ defined on a Banach space possesses a unique fixed point. They obtained this result by using the fixed point property of the map $T_\lambda$, where $T_\lambda(u)=(1-\lambda)u+\lambda Tu$, $0<\lambda<1$.

It is to be noted that the enriched contraction introduced by Berinde and  P\u{a}curar involves the displacement $\|u-v\|$ only, which indicates that this contraction is basically the enriched version of  Banach contraction. We know that for two points $u,v$ the other displacements are $\|u-Tu\|,\ \|v-Tv\|,\ \|u-Tv\|,\ \|v-Tu\|$ and there are a lot of interesting contractions involving these displacements, some of them are due to Kannan \cite{Rk68}, Chatterjea \cite{Ch}, Reich \cite{R3}, $\acute{C}$iri$\acute{c}$ \cite{C1,C7}, Bianchini \cite{B10}, Khan \cite{K78} etc. (see \cite{R4} for more contractions of this type). So it will be attractive to apply enrichment technique to the contractions by the above mentioned mathematicians. Berinde and  P\u{a}curar did this task for Kannan and Chatterjea mappings, see \cite{BP1,BP2}. So it is open in the literature to apply  enrichment technique to the contractions of Reich \cite{R3}, $\acute{C}$iri$\acute{c}$ \cite{C1,C7}, Bianchini \cite{B10}, Khan \cite{K78} etc. 
If we want to apply this technique to the above-named contractions separately, then we  have to prove a handful number of results. So if we can establish a very few results involving enriched contractions, from which all the enriched versions of the aforesaid contractions can be deduced, then it will be simple and concise to the literature.  
Motivated by this observation, in the present article, we introduce two new types of enriched contractions, viz., enriched $\mathcal{A}$-contraction and enriched $\mathcal{A}'$-contraction. Then we prove two fixed point results showing that a self mapping  $T$ satisfying anyone of these two contractions admits a fixed point if the underlying space is a Banach space. Further, we show that from our obtained results, the enriched versions of the contractions due to Kannan, Chatterjea, Reich,  Bianchini, Khan and many other contractions can be deduced.
Along with these, we show that the fixed point problems related to these two contractions are well-posed and possess limit shadowing property.  We give few examples  to support the validity of our results.

Before going to our main work, we first recollect a few result and definitions, which will be useful in our next sections.
At first, we recall the following result due to  Akram et. al\cite{AZS08}.
 
\begin{theorem} {\bf cf. (\cite[p. 29, Theorem 5]{AZS08}).}\label{dt1} 
Let a self map $T$ on a complete metric space $(X, d)$ satisfies the condition:
$$d(Tu, Tv)\le \alpha(d(u,v), d(u, Tu), d(v, Tv))$$
for all $u, v\in X$ and some $\alpha\in A$, where $A$  is the collection of all functions $\alpha:\mathbb{R}^3_+\to \mathbb{R}_+$ satisfying
\begin{itemize}
\item[{(i)}] $\alpha$ is continuous on the set $\mathbb{R}_+$ (with respect to the Euclidean metric on $\mathbb{R}^3$).
\item[{(ii)}] $a\le kb$ for some $k\in [0,1)$ whenever $a\le \alpha(a, b, b)$ or $a\le \alpha(b, a, b)$ or $a\le \alpha (b, b, a)$ for all $a, b$.
\end{itemize} 
Then $T$ is a Picard operator.
\end{theorem}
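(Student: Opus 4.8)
The plan is to run the classical Picard iteration, show it is Cauchy, invoke completeness to obtain a candidate limit, and then use the continuity of $\alpha$ together with property (ii) to confirm this limit is the unique fixed point, so that $T$ is a Picard operator. Fix any $x_0\in X$ and set $x_{n+1}=Tx_n$ for $n\ge 0$; if $x_n=x_{n+1}$ for some $n$ we are already done, so I assume $x_n\neq x_{n+1}$ throughout.

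First I would establish geometric decay of the consecutive gaps. Applying the contractive hypothesis with $u=x_{n-1}$ and $v=x_n$, and using $d(x_{n-1},Tx_{n-1})=d(x_{n-1},x_n)$ and $d(x_n,Tx_n)=d(x_n,x_{n+1})$, gives
$$d(x_n,x_{n+1})=d(Tx_{n-1},Tx_n)\le \alpha\bigl(d(x_{n-1},x_n),\,d(x_{n-1},x_n),\,d(x_n,x_{n+1})\bigr).$$
Writing $a=d(x_n,x_{n+1})$ and $b=d(x_{n-1},x_n)$, this is exactly the pattern $a\le\alpha(b,b,a)$, so property (ii) yields $a\le kb$ for some fixed $k\in[0,1)$. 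Hence $d(x_n,x_{n+1})\le k\,d(x_{n-1},x_n)$, and by induction $d(x_n,x_{n+1})\le k^n\,d(x_0,x_1)$.

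From this bound the usual telescoping estimate $d(x_n,x_m)\le \frac{k^n}{1-k}\,d(x_0,x_1)$ for $m>n$ shows that $\{x_n\}$ is Cauchy, and completeness of $X$ furnishes a limit $x^{*}$. To see that $x^{*}$ is fixed, I would combine the triangle inequality with the contractive hypothesis applied to $u=x_n$, $v=x^{*}$ to obtain
$$d(x^{*},Tx^{*})\le d(x^{*},x_{n+1})+\alpha\bigl(d(x_n,x^{*}),\,d(x_n,x_{n+1}),\,d(x^{*},Tx^{*})\bigr).$$
Letting $n\to\infty$ and using the continuity hypothesis (i), the first term vanishes while the inner arguments tend to $\bigl(0,0,d(x^{*},Tx^{*})\bigr)$, giving $d(x^{*},Tx^{*})\le\alpha\bigl(0,0,d(x^{*},Tx^{*})\bigr)$. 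This is again the pattern $a\le\alpha(b,b,a)$ with $b=0$, so property (ii) forces $d(x^{*},Tx^{*})\le k\cdot 0=0$, i.e.\ $Tx^{*}=x^{*}$.

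For uniqueness, if $x^{*}$ and $y^{*}$ are both fixed points then $d(x^{*},y^{*})=d(Tx^{*},Ty^{*})\le\alpha\bigl(d(x^{*},y^{*}),0,0\bigr)$, which is the pattern $a\le\alpha(a,b,b)$ with $b=0$; property (ii) again yields $d(x^{*},y^{*})\le 0$, so $x^{*}=y^{*}$. The main obstacle is the passage to the limit in the fixed point step: one must keep the continuity hypothesis (i) in hand in order to move the limit inside $\alpha$, and one must correctly recognize at each stage which of the three admissible patterns of (ii) is being invoked, so that the constant $k\in[0,1)$ can legitimately be extracted.
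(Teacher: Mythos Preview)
The paper does not prove this theorem; it is recalled from \cite{AZS08} as a preliminary result and used without proof (in particular, the case $b=0$ of Theorem~\ref{t4} is dispatched by a direct appeal to Theorem~\ref{dt1}). Your argument is correct and is essentially the standard one; moreover it mirrors, step for step, the paper's proof of the enriched analogue Theorem~\ref{t4}: Picard iteration, recognition of the pattern in condition~(ii) to extract a fixed $k\in[0,1)$, the telescoping Cauchy estimate, passage to the limit via continuity of $\alpha$ to identify the fixed point, and uniqueness by the pattern $a\le\alpha(a,0,0)$.
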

Next, we recall the well-posedness and limit shadowing property of a mapping.
\begin{definition} \label{d10}
Let $T$ be a self map defined on a metric space $(X, d)$. Then the fixed point problem concerning $T$ is known as well-posed if the followings hold:
\begin{itemize}
\item[{(i)}] $T$ has a unique fixed point $p\in X$;
\item[{(ii)}] for any sequence $\{u_n\}$ in $X$ with $\displaystyle\mathop{\lim_{n\to \infty}d(u_n, Tu_n)=0}$, we have 
$$\displaystyle\mathop{\lim_{n\to \infty}d(u_n, p)=0}.$$
\end{itemize}
\end{definition}
\begin{definition}\label{d12}
Let $T$ be a self-map defined on a metric space $(X, d)$. Then the fixed point problem involving $T$ is said to possess limit shadowing property in $X$ if for any sequence $\{u_n\}$ in $X$ such that $\displaystyle\mathop{\lim_{n\to \infty}d(u_n,Tu_n)=0}$, we have $p\in X$ with $\displaystyle\mathop{\lim_{n\to \infty}d(T^np,u_n)=0}$.
\end{definition}

\section{Enriched $\mathcal{A}$-contraction and enriched $\mathcal{A}'$-contraction}
In this section we give the formal definitions of enriched $\mathcal{A}$-contraction and enriched $\mathcal{A}'$-contraction mappings. To do this, we need two family of mappings which satisfy certain properties. Let $\mathcal{A}$ be the collection of all mappings $f:\mathbb{R}_+^3\to \mathbb{R}$ satisfying the following conditions:
\begin{itemize}
\item[{($\mathcal{A}_1$)}] $f$ is continuous;
\item[{($\mathcal{A}_2$)}] if $r\le f(s,r,s)$ or $r\le f(r, s, s)$, then there exists $k\in [0, 1)$ such that $r\le ks$;
\item[{($\mathcal{A}_3$)}] for $\lambda>0$ and for all $r,s,t\in \mathbb{R}_+$, $\lambda f(r,s,t)\le f(\lambda r, \lambda s, \lambda t)$.
\end{itemize}
 
Let $\mathcal{A}'$ be the collection of all mappings $f:\mathbb{R}_+^3\to \mathbb{R}$ satisfying the following conditions:
\begin{itemize}
\item[{($\mathcal{A}'_1$)}] $f$ is continuous;
\item[{($\mathcal{A}'_2$)}] if $r\le f(r, s, s)$ or $r\le f(s, s, r)$, then there exists $k\in [0, 1)$ such that $r\le ks$;
\item[{($\mathcal{A}'_3$)}] for $\lambda>0$ and for all $r,s,t\in \mathbb{R}_+$, $\lambda f(r,s,t)\le f(\lambda r, \lambda s, \lambda t)$;
\item[{($\mathcal{A}'_4$)}] if $t\le t_1$, then $f(r,s,t)\le f(r,s,t_1)$;
\item[{($\mathcal{A}'_5$)}] if $r \le f(s,0,  r + s)$, then $r \le ks$ for some $k\in [0,1)$;
\item[{($\mathcal{A}'_6$)}] if $r\le f(r, r, r)$, then $r=0$.
\end{itemize}

Below we present few example of mappings $f$ belonging to the class $\mathcal{A}$:  
\begin{itemize}
\item [(i)] $f(r,s,t)=\alpha(s+t)$, where $0\leq \alpha<\frac{1}{2}$;
\item [(ii)] $f(r,s,t)=\alpha \max\{s,t\}$, where $0\leq \alpha<1$;
\item [(iii)] $f(r,s,t)=\alpha\max\{r,s,t\}$, where $0\leq \alpha<1$;
\item [(iv)] $f(r,s,t)=\alpha_1r+\alpha_2s+\alpha_3t$, where $0\leq \alpha_1,\alpha_2,\alpha_3<1$ and $\alpha_1+\alpha_2+\alpha_3<1$.
\end{itemize}
The following mappings $f$ belong to the class $\mathcal{A}'$: 
\begin{itemize}
\item [(i)] $f(r,s,t)=\alpha(s+t)$, where $0\leq\alpha<\frac{1}{2}$;
\item [(ii)] $f(r,s,t)=\alpha(r+s+t)$, where $0\leq\alpha<1$;
\item [(iii)] $f(r,s,t)=\alpha\max\{s,t\}$, where $0\leq\alpha<1$.
\end{itemize}
Next, we define enriched $\mathcal{A}$-contraction and enriched $\mathcal{A}'$-contractions in the following way.
\begin{definition} \label{d01}
Let $(X, \|\cdot\|)$ be a real Banach space. Let $T:X\to X$ be a mapping such that there exists an $f\in \mathcal{A}$ with
$$\|b(u-v)+Tu-Tv\|\le f((b+1)\|u-v\|, \|u-Tu\|, \|v-Tv\|)\eqno(1)$$
for all $u, v\in X$ with $u\neq v$ and $b\in [0, \infty)$.
Then $T$ is said to be an enriched $\mathcal{A}$-contraction.
\end{definition}
\begin{definition} \label{d2}
Let $(X, \|\cdot\|)$ be a real Banach space. Let $T:X\to X$ be a mapping such that there exists an $f\in \mathcal{A}'$ with
$$\|b(u-v)+Tu-Tv\|\le f((b+1)\|u-v\|, \|(b+1)(u-v)+v-Tv\|, \|(b+1)(v-u)+u-Tu\|)\eqno(2)$$
for all $u, v\in X$ with $u\neq v$ and $b\in [0, \infty)$.
Then $T$ is said to be an enriched $\mathcal{A}'$-contraction.
\end{definition}
Next, we give some examples of the above two types of contractions.
\begin{example}
Let $X=\mathbb{R}$ and take the usual norm on $X$. Consider the mappings $T_1,T_2:X\to X$ defined by $$T_1u=-2u$$ for all $x\in X$ and $$T_2u=\begin{cases}
u+16,~~$if$~~u\in [1,2]\\
16,~~$if$~~u\notin [1,2].
\end{cases}$$
Then $T_1$ is an enriched $\mathcal{A}$-contraction for $b=\frac{5}{4}$ and $f(r,s,t)=\frac{s+t}{3}$, and $T_2$ is an enriched $\mathcal{A}'$-contraction for $b=\frac{1}{3}$ and $f(r,s,t)=\frac{1}{3}r+\frac{1}{4}s+\frac{1}{4}t.$ 
\end{example}

Next, we present an example which is not an enriched $\mathcal{A}$-contraction.
\begin{example}\label{e3}
Let $X=\mathbb{R}$ be the Banach space equipped with usual norm. Define 
$$ Tu=\left\{%
\begin{array}{ll}
    1+u &\hbox{if $u\ge 0$}\\
    0 & \hbox{if $u<0$}
\end{array}%
\right.$$ for all $u\in X$.

We claim that $T$ is not an enriched $\mathcal{A}$-contraction on $X$. If possible, let $T$ be an enriched $\mathcal{A}$-contraction on $X$. Then there exists $b\in [0, \infty)$ such that
$$\|b(u-v)+Tu-Tv\|\le f((b+1)\|u-v\|, \|u-Tu\|, \|v-Tv\|)$$
for all $u, v\in X$.

Let us take $u, v\in X$ be such that $u, v\ge 0$. Then
\begin{align*}
&\|b(u-v)+Tu-Tv\|\le f((b+1)\|u-v\|, \|u-Tu\|, \|v-Tv\|)\\
&\Longrightarrow \|(b+1)(u-v)\|\le f((b+1)\|u-v\|, 1, 1).
\end{align*}
Then there exists $k\in [0, \infty)$ such that
\begin{align*}
&|(b+1)| |u-v|\le k\cdot 1\\
&\Longrightarrow |u-v|\le \frac{k}{b+1}
\end{align*}
for all $u, v\ge 0$ with $u\neq v$, which is a contradiction.

Hence $T$ is not an enriched $\mathcal{A}$-contraction on $X$. Note that $T$ admits no fixed point in $X$.
\end{example}

\section{Main Results}\label{sec2}
 
In this section, we  present the following two results that ensure the existence of fixed point of the two contractions, mentioned here, if the domain of the mapping is a Banach space.

\begin{theorem}\label{t4}
Let $X$ be Banach space and $T$ be an enriched $\mathcal{A}$-contraction. Then $T$ has a unique fixed point in $X$ and there exists $\lambda \in (0, 1]$ such that the sequence $\{u_n\}$ defined by $u_{n+1}=(1-\lambda)u_n+\lambda Tu_n$, $n\ge 0$ converges to that fixed point, for any $u_o\in X$.
\end{theorem}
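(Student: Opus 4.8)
The plan is to reduce the problem to an ordinary fixed point statement for the averaged operator $T_\lambda u = (1-\lambda)u + \lambda Tu$, following the strategy of Berinde and P\u{a}curar. The natural choice is $\lambda = \frac{1}{b+1}$, so that $\lambda \in (0,1]$ (with $\lambda = 1$ exactly when $b=0$) and $b = \frac{1-\lambda}{\lambda}$, i.e. $\lambda b = 1-\lambda$. First I would record the elementary identities $u - T_\lambda u = \lambda(u - Tu)$ and $T_\lambda u - T_\lambda v = (1-\lambda)(u-v) + \lambda(Tu - Tv) = \lambda\bigl[b(u-v) + Tu - Tv\bigr]$, together with the observation that $T_\lambda p = p$ if and only if $Tp = p$ (since $\lambda > 0$). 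Thus it suffices to prove that $T_\lambda$ has a unique fixed point to which its Picard iterates $u_{n+1} = T_\lambda u_n$ — which are precisely the iterates in the statement — converge.

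The first substantive step is to convert (1) into an unenriched $\mathcal{A}$-contraction inequality for $T_\lambda$. Multiplying (1) by $\lambda$ and using the identities above, together with $(b+1) = \frac1\lambda$, gives $\|T_\lambda u - T_\lambda v\| \le \lambda f(\frac1\lambda\|u-v\|, \frac1\lambda\|u - T_\lambda u\|, \frac1\lambda\|v - T_\lambda v\|)$; then property ($\mathcal{A}_3$), applied with the scalar $\lambda$ to the arguments $\frac1\lambda\|u-v\|$ and so on, absorbs the outer factor $\lambda$ against the $\frac1\lambda$'s to yield
\[
\|T_\lambda u - T_\lambda v\| \le f(\|u-v\|, \|u - T_\lambda u\|, \|v - T_\lambda v\|), \qquad u \neq v .
\]
This is the crucial reduction, and the interplay between the scaling factor $\frac{1}{b+1}$ and the homogeneity-type hypothesis ($\mathcal{A}_3$) is, I expect, the main point requiring care.

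With this inequality in hand, the remainder is the familiar Picard-operator argument, arranged so that each appeal to ($\mathcal{A}_2$) falls into one of its two permitted forms. Writing $d_n = \|u_{n+1} - u_n\|$ and applying the reduced inequality to the pair $(u_n, u_{n-1})$ gives $d_n \le f(d_{n-1}, d_n, d_{n-1})$, which is the form $r \le f(s,r,s)$; hence ($\mathcal{A}_2$) yields $d_n \le k\,d_{n-1}$ for some fixed $k \in [0,1)$, so $d_n \le k^n d_0$ and $\{u_n\}$ is Cauchy. (If $d_{n-1}=0$ for some $n$ the sequence is already stationary at a fixed point.) Completeness of $X$ produces a limit $p$. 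To see $T_\lambda p = p$, I would apply the reduced inequality to the pair $(p, u_n)$ — crucially in this order — to get $\|T_\lambda p - T_\lambda u_n\| \le f(\|p - u_n\|, \|p - T_\lambda p\|, d_n)$; letting $n \to \infty$ and using continuity ($\mathcal{A}_1$) together with $\|p - T_\lambda p\| \le \|p - u_{n+1}\| + \|T_\lambda p - T_\lambda u_n\|$ gives $\|p - T_\lambda p\| \le f(0, \|p - T_\lambda p\|, 0)$, again of the form $r \le f(s,r,s)$ with $s=0$, so ($\mathcal{A}_2$) forces $\|p - T_\lambda p\| = 0$.

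Finally, for uniqueness suppose $T_\lambda p = p$ and $T_\lambda q = q$ with $p \neq q$; the reduced inequality on $(p,q)$ gives $\|p - q\| \le f(\|p-q\|, 0, 0)$, which is the form $r \le f(r,s,s)$ with $s=0$, whence ($\mathcal{A}_2$) gives $\|p-q\| = 0$, a contradiction. Since the fixed points of $T_\lambda$ and of $T$ coincide, $p$ is the unique fixed point of $T$ and $u_{n+1} = (1-\lambda)u_n + \lambda Tu_n \to p$, completing the argument. I note that exactly the two forms in ($\mathcal{A}_2$) ever arise, so the weaker (two-condition) nature of $\mathcal{A}$ is precisely what is used; since $\mathcal{A}$ is strictly broader than Akram's class $A$ in Theorem \ref{dt1}, the latter cannot be quoted verbatim for $T_\lambda$, and the self-contained reasoning above is the cleaner route.
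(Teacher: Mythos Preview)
Your proposal is correct and follows essentially the same approach as the paper: set $\lambda = 1/(b+1)$, use ($\mathcal{A}_3$) to convert the enriched inequality into $\|T_\lambda u - T_\lambda v\| \le f(\|u-v\|,\|u-T_\lambda u\|,\|v-T_\lambda v\|)$, and then run the standard Picard argument, invoking ($\mathcal{A}_2$) in exactly the two forms $r\le f(s,r,s)$ and $r\le f(r,s,s)$ that you identify. The only cosmetic difference is that the paper handles the case $b=0$ separately by citing Theorem~\ref{dt1}, whereas your uniform treatment---together with your observation that Akram's class $A$ requires a third permutation condition not present in $\mathcal{A}$, so Theorem~\ref{dt1} is not literally applicable---is arguably cleaner.
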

\begin{proof}
Let $b>0$. Set $\lambda=\frac{1}{b+1}>0$. Then the from $(1)$, we get
\begin{align*}
\|T_\lambda u-T_\lambda v\|&\le \lambda f\left(\frac{1}{\lambda}\|u-v\|, \|u-Tu\|, \|v-Tv\|\right)\\
&\le f(\|u-v\|, \|u-T_\lambda u\|, \|v-T_\lambda v\|)\hspace{12em}(3)
\end{align*}
for all $u, v\in X$ with $u\neq v$.

Let $u_0\in X$ be arbitrary. Define $u_{n}=T_\lambda^nu_0$ for all $n\ge 1$. Let us put $u=u_n$ and $v=u_{n-1}$ in $(2)$. Then we have
$$\|u_{n+1}-u_n\|\le f(\|u_n-u_{n-1}\|, \|u_n-u_{n+1}\|, \|u_{n-1}-u_{n}\|)$$
which implies that
$$\|u_{n+1}-u_n\|\le k\|u_n-u_{n-1}\|$$
for some $k\in [0, 1)$. From this we get
$$\|u_{n+1}-u_n\|\le k^n\|u_1-u_{0}\|.$$

Now for all $m,n\ge 1$, we have
\begin{align*}
\|u_{n+m}-u_n\|&\le \|u_{n+m}-u_{n+m-1}\|+\|u_{n+m-1}-u_{n+m-2}\|+\cdots+\|u_{n+1}-u_n\|\\
&\le (k_{n+m-1}+k^{n+m-2}+\cdots +k^n)\|u_1-u_0\|\\
&=k^n \frac{1-k^m}{1-k}\|u_1-u_0\| 
\end{align*}
which implies that $\|u_{n+m}-u_n\|\to 0$ as $m, n\to \infty$. Hence $\{u_n\}$ is a Cauchy sequence in $X$ and hence there exists an element $p\in X$ such that $u_n\to p$ as $n\to \infty$.

Now
$$\|T_\lambda p-u_{n+1}\|\le f(\|p-u_n\|, \|p-T_\lambda p\|, \|u_n-u_{n+1}\|).$$
Taking limit as $n\to \infty$, we get
$$\|T_\lambda p-p\|\le f(\|p-p\|, \|p-T_\lambda p\|, \|p-p\|)$$
which implies that
$$\|T_\lambda p-p\|\le k\cdot \|p-p\|=0.$$ 
This implies that $T_\lambda p=p$ and consequently $Tp=p$. Therefore, $p$ is a fixed point of $T$. 

Let $q\in X$ be another fixed point of $T$ and consequently a fixed point of $T_\lambda$. Then
\begin{align*}
\|p-q\|&=\|T_\lambda p-T_\lambda q\|\\
&\le f(\|p-q\|, \|p-T_\lambda p\|, \|q-T_\lambda q\|)\\
&=f(\|p-q\|, \|p-p\|, \|q-q\|)\\
&=f(\|p-q\|, 0, 0).
\end{align*} 
Therefore there exists a $k\in [0, 1)$ such that $\|p-q\|\le k\cdot 0=0$. Hence $p=q$.

If $b=0$, then $(1)$ reduces to the form
$$\|T u-T v\|\le f(\|u-v\|, \|u-T u\|, \|v-T v\|)$$
   for all $u,v\in X$ with $u\neq v$. Hence the result follows from Theorem \ref{dt1}.
\end{proof}
\begin{theorem}\label{t5}
Let $X$ be Banach space and $T$ be an enriched $\mathcal{A}'$-contraction. Then $T$ has a unique fixed point in $X$ and there exists $\lambda \in (0, 1]$ such that the sequence $\{u_n\}$ defined by $u_{n+1}=(1-\lambda)u_n+\lambda Tu_n$, $n\ge 0$ converges to that fixed point, for any $u_0\in X$.
\end{theorem}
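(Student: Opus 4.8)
The plan is to mirror the proof of Theorem~\ref{t4}: transfer the enriched condition $(2)$ to the averaged operator $T_\lambda u = (1-\lambda)u + \lambda Tu$, run a Picard iteration on $T_\lambda$, and then recover a fixed point of $T$. The family $\mathcal{A}'$ has been engineered so that the Chatterjea-type ``cross'' displacements $\|u-Tv\|$, $\|v-Tu\|$ that now appear can be controlled, so the novelty is entirely in how those arguments are tamed.

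First I would set $\lambda = \tfrac{1}{b+1} \in (0,1]$ and use the identity $\lambda\bigl(b(u-v)+Tu-Tv\bigr) = T_\lambda u - T_\lambda v$, together with $v - Tv = \tfrac{1}{\lambda}(v - T_\lambda v)$. The key algebraic observation is that the second and third arguments of $f$ in $(2)$ collapse to cross displacements of $T_\lambda$, namely $(b+1)(u-v)+v-Tv = \tfrac{1}{\lambda}(u - T_\lambda v)$ and $(b+1)(v-u)+u-Tu = \tfrac{1}{\lambda}(v - T_\lambda u)$. Feeding these into $(2)$ and applying $(\mathcal{A}'_3)$ to absorb the factor $\lambda$, the condition reduces to the exact analogue of $(3)$:
$$\|T_\lambda u - T_\lambda v\| \le f\bigl(\|u-v\|, \|u - T_\lambda v\|, \|v - T_\lambda u\|\bigr)$$
for all $u \neq v$, i.e.\ a $b=0$ Chatterjea-type contraction for $T_\lambda$. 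Since $\lambda = 1$ when $b=0$ and $(\mathcal{A}'_3)$ holds for every $\lambda>0$, this derivation runs uniformly for all $b\ge 0$, so unlike Theorem~\ref{t4} no separate appeal to Theorem~\ref{dt1} is needed (indeed no $\mathcal{A}'$-analogue of it is available).

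Next I would fix $u_0$, put $u_n = T_\lambda^n u_0$, and substitute $u = u_n$, $v = u_{n-1}$. Writing $d_n = \|u_{n+1}-u_n\|$, the middle argument vanishes because $T_\lambda u_{n-1} = u_n$, giving $d_n \le f(d_{n-1}, 0, \|u_{n-1}-u_{n+1}\|)$. This is the step I expect to be the main obstacle and the place where $\mathcal{A}'$ genuinely differs from $\mathcal{A}$: the third argument is a cross term that must be converted into geometric decay. I would bound $\|u_{n-1}-u_{n+1}\| \le d_{n-1}+d_n$ by the triangle inequality, invoke the monotonicity $(\mathcal{A}'_4)$ in the third slot to obtain $d_n \le f(d_{n-1}, 0, d_{n-1}+d_n)$, and then apply $(\mathcal{A}'_5)$ with $r = d_n$ and $s = d_{n-1}$ to conclude $d_n \le k\,d_{n-1}$ for some $k\in[0,1)$. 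From here the telescoping and geometric-series estimate is verbatim that of Theorem~\ref{t4}, so $\{u_n\}$ is Cauchy and converges to some $p\in X$.

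Finally I would check that $p$ is the unique fixed point. Putting $u=p$, $v=u_n$ in the reduced inequality and letting $n\to\infty$, continuity $(\mathcal{A}'_1)$ of $f$ and of the norm yields $\|T_\lambda p - p\| \le f(0,0,\|T_\lambda p - p\|)$; the second alternative of $(\mathcal{A}'_2)$ (with $s=0$) then forces $\|T_\lambda p - p\|=0$, so $T_\lambda p = p$ and hence $Tp=p$. For uniqueness, if $q$ is another fixed point of $T$, hence of $T_\lambda$, then taking $u=p$, $v=q$ gives $\|p-q\| \le f(\|p-q\|,\|p-q\|,\|p-q\|)$, and $(\mathcal{A}'_6)$ delivers $\|p-q\|=0$. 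This completes the argument, with the existence of the announced $\lambda\in(0,1]$ being exactly $\lambda=\tfrac{1}{b+1}$.
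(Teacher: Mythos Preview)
Your proposal is correct and follows essentially the same route as the paper: the reduction to $T_\lambda$ via $(\mathcal{A}'_3)$, the use of $(\mathcal{A}'_4)$ and $(\mathcal{A}'_5)$ to extract geometric decay from the cross term $\|u_{n-1}-u_{n+1}\|$, the limit argument via $(\mathcal{A}'_1)$ and $(\mathcal{A}'_2)$ to show $T_\lambda p=p$, and the uniqueness via $(\mathcal{A}'_6)$ all match the paper exactly. The only cosmetic difference is that the paper treats $b=0$ as a separate case (replacing $T_\lambda$ by $T$ and rerunning the same steps), whereas you observe directly that $\lambda=1$ already covers it.
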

\begin{proof}
Let $b>0$. Set $\lambda=\frac{1}{b+1}$ so that $0<\lambda<1$. Now note that
$$u-T_\lambda v=(u-v)+\lambda(v-Tv)\mbox{ and } v-T_\lambda u=(v-u)+\lambda(u-Tu).$$
Then the from $(2)$, we get
\begin{align*}
\|T_\lambda u-T_\lambda v\|&\le \lambda f\left(\frac{1}{\lambda}\|u-v\|, \left\|\frac{1}{\lambda}(u-v)+v-Tv\right\|, \left\|\frac{1}{\lambda}(v-u)+u-Tu\right\|\right)\\
&\le f(\|u-v\|, \|(u-v)+\lambda(v-T v)\|, \|(v-u)+\lambda(u-T u)\|)\\
&=f(\|u-v\|, \|u-T_\lambda v\|, \|v-T_\lambda u\|)\hspace{12em}(4)
\end{align*}
for all $u, v\in X$ with $u\neq v$.

Let $u_0\in X$ be arbitrary. Define $u_{n}=T_\lambda^nu_0$ for all $n\ge 1$. Let us put $u=u_n$ and $v=u_{n-1}$ in $(2)$. Then we have
\begin{align*}
\|u_{n+1}-u_n\|&\le f(\|u_n-u_{n-1}\|, \|u_n-u_{n}\|, \|u_{n-1}-u_{n+1}\|)\\
&\le f(\|u_n-u_{n-1}\|, \|u_n-u_{n}\|, \|u_{n-1}-u_n\|+\|u_n-u_{n+1}\|)
\end{align*}
which implies that
$$\|u_{n+1}-u_n\|\le k\|u_n-u_{n-1}\|$$
for some $k\in [0, 1)$. From this we get
$$\|u_{n+1}-u_n\|\le k^n\|u_1-u_{0}\|.$$

Now for all $m,n\ge 1$, we have
\begin{align*}
\|u_{n+m}-u_n\|&\le \|u_{n+m}-u_{n+m-1}\|+\|u_{n+m-1}-u_{n+m-2}\|+\cdots+\|u_{n+1}-u_n\|\\
&\le (k_{n+m-1}+k^{n+m-2}+\cdots +k^n)\|u_1-u_0\|\\
&=k^n \frac{1-k^m}{1-k}\|u_1-u_0\| 
\end{align*}
which implies that $\|u_{n+m}-u_n\|\to 0$ as $m, n\to \infty$. Hence $\{u_n\}$ is a Cauchy sequence in $X$ and hence there exists an element $p\in X$ such that $u_n\to p$ as $n\to \infty$.

Now
$$\|T_\lambda p-u_{n+1}\|\le f(\|p-u_n\|, \|p-u_{n+1}\|, \|u_n-T_\lambda p\|).$$
Taking limit as $n\to \infty$, we get
$$\|T_\lambda p-p\|\le f(\|p-p\|, \|p-p\|, \|p-T_\lambda p\|)$$
which implies that
$$\|T_\lambda p-p\|\le k\cdot \|p-p\|=0$$ for some $k\in [0, 1)$. 

Hence $T_\lambda p=p$ and consequently, $Tp=p$. Thus $p$ is a fixed point of $T$. 

Let $q\in X$ be another fixed point of $T$ and hence a fixed point of $T_\lambda$. Then
\begin{align*}
\|p-q\|&=\|T_\lambda p-T_\lambda q\|\\
&\le f(\|p-q\|, \|p-T_\lambda q\|, \|q-T_\lambda p\|)\\
&=f(\|p-q\|, \|p-q\|, \|q-p\|)
\end{align*} 
which implies that $\|p-q\|=0$. Hence $p=q$.

If $b=0$, then $(2)$ reduces to the form
$$\|T u-T v\|\le f(\|u-v\|, \|u-T v\|, \|v-T u\|)$$
for all $u, v\in X$ with $u\neq v$. 

Let $u_0\in X$. Define the sequence $\{u_n\}$ by $u_n=T^nu_0$ for all $n\ge 1$. Then just replacing $T_\lambda$ by $T$ in the above proof, the result follows.  
\end{proof}
We now note an important observation  about the above two results, which is presented in the following remark.
\begin{remark}
In most of the contractions for Banach space valued mappings, we see that if the domain of the mapping is a closed subset (not necessarily a subspace) of a Banach space, then the mapping possesses a fixed point. But this fact is not true in case of enriched  contractions. More precisely, in the above two theorems, if the domain of the mapping $T$ is a closed subset of a Banach space, then $T$ may not acquire a fixed point. To support this, we consider the following example.
\begin{example}
Choose $X=\mathbb{R}$ with usual norm and take $C=(-\infty,-1]\cup [1,\infty)$. Define $T:C \to C$ by $Tu=-2u$ for all $u\in C$. Then $T$ is an enriched $\mathcal{A}$-contraction for $b=\frac{5}{4}$ and $f(r,s,t)=\frac{s+t}{3}$ but $T$ is fixed point free.
\end{example}
\end{remark}
In view of Remark~3.3., we pose the following open question:
\begin{open question} 
Does there exist a self map defined on a subspace $($not necessarily closed$)$ of a Banach space satisfying enriched $\mathcal{A}$-contraction $($resp. enriched $\mathcal{A}'$-contraction$)$ having a fixed point?
\end{open question}

By choosing particular $f$ in above two theorems, we have the following remarks.
\begin{remark}
If we choose $f(r,s,t)=\alpha r$, where $0\leq \alpha<1$; $f(r,s,t)=\alpha(s+t)$, where $0\leq \alpha<\frac{1}{2}$; $f(r,s,t)=\alpha_1r+\alpha_2s+\alpha_3t$, where $0\leq \alpha_1,\alpha_2,\alpha_3<1$ and $\alpha_1+\alpha_2+\alpha_3<1$; $f(r,s,t)=\alpha \max\{s,t\}$, where $0\leq \alpha<1$; $f(r,s,t)=\alpha \sqrt{st}$, where $0\leq \alpha<1$ in Theorem \ref{t4}, then we get the enriched versions of the contractions of Banach \cite{B22}, Kannan \cite {Rk68}, Reich \cite{R3}, Bianchini \cite{B10} and Khan \cite{K78} respectively. 
\end{remark}
\begin{remark}
If we choose $f(r,s,t)=\alpha(s+t)$, where $0\leq \alpha<\frac{1}{2}$ in Theorem \ref{t5}, then we get the enriched version of the  Chatterjea contraction \cite {Ch}. Also, for  $f(r,s,t)=\alpha\max\{s,t\}$, where $0\leq \alpha<1$; $f(r,s,t)=\alpha_1r+\alpha_2s+\alpha_3t$, where $0\leq \alpha_1,\alpha_2,\alpha_3<1$ and $\alpha_1+\alpha_2+\alpha_3<1$ in Theorem \ref{t5}, the enriched versions of the following two contractions  respectively, can be acquired:
\begin{itemize}
\item $\|Tu-Tv\|\leq \alpha \max\{\|u-Tv\|,\|v-Tu\|\}$, $0\leq \alpha<1$;
\item $\|Tu-Tv\|\leq \alpha_1\|u-v\|+\alpha_2 \|u-Tv\|+\alpha_3\|v-Tu\|$,  $0\leq \alpha_1,\alpha_2,\alpha_3<1$ and $\alpha_1+\alpha_2+\alpha_3<1$. 
\end{itemize}
\end{remark}
\begin{remark}
One can perceive from the above remarks that Berinde and  P\u{a}curar's results regarding existence of fixed point of different types of enriched contractions \cite{BP20, BP1, BP2} are particular cases of our results.
\end{remark}
For $b=0$ and some particular $f$ in Theorem \ref{t4} and \ref{t5}, we obtain some classical fixed point results, which is presented in the next remarks.
\begin{remark}
In particular for $b=0$ and for $f(r,s,t)=\alpha(s+t)$, where $0\leq \alpha<\frac{1}{2}$; $f(r,s,t)=\alpha_1r+\alpha_2s+\alpha_3t$, where $0\leq \alpha_1,\alpha_2,\alpha_3<1$ and $\alpha_1+\alpha_2+\alpha_3<1$; $f(r,s,t)=\alpha \max\{s,t\}$, where $0\leq \alpha<1$; $f(r,s,t)=\alpha \sqrt{st}$, where $0\leq \alpha<1$ in Theorem \ref{t4}, we can obtain the classical fixed point results of Kannan \cite {Rk68}, Reich \cite{R3} and Bianchini \cite{B10}, Khan \cite{K78} respectively as corollaries.
\end{remark}
\begin{remark}
For $b=0$ and $f(r,s,t)=\alpha(s+t)$, where $0\leq \alpha<\frac{1}{2}$ in Theorem \ref{t5},  the classical fixed point theorem of  Chatterjea \cite {Ch} can be deduced as a corollary. Also for $b=0$ and $f(r,s,t)=\alpha\max\{s,t\}$, where $0\leq \alpha<1$; $f(r,s,t)=\alpha_1r+\alpha_2s+\alpha_3t$, where $0\leq \alpha_1,\alpha_2,\alpha_3<1$ and $\alpha_1+\alpha_2+\alpha_3<1$ in Theorem \ref{t5},  two fixed point results concerning the following contractions respectively, can be deduced:
\begin{itemize}
\item $\|Tu-Tv\|\leq \alpha \max\{\|u-Tv\|,\|v-Tu\|\}$, $0\leq \alpha<1$;
\item $\|Tu-Tv\|\leq \alpha_1\|u-v\|+\alpha_2 \|u-Tv\|+\alpha_3\|v-Tu\|$,  $0\leq \alpha_1,\alpha_2,\alpha_3<1$ and $\alpha_1+\alpha_2+\alpha_3<1$. 
\end{itemize}
\end{remark}

We now take a couple of  examples in support of the  Theorem \ref{t4}.
\begin{example}\label{e6}
Let us consider the Banach space $(X,\Vert \cdot \Vert)$, where $X=C[0,1]$ and $\Vert \cdot \Vert$ is the sup norm. Next, we define a mapping $T:X\to X$ by $(Tu)(t)=-2u(t)$ for all $u\in X$ and $t\in [0, 1]$. We choose $f\in \mathcal{A}$, defined by $f(r,s,t)=\frac{1}{3}(s+t)$ and $b=\frac{5}{4}$. Then for any $u,v\in X$, we have 
\begin{align*}
f((b+1)\|u-v\|, \|u-Tu\|, \|v-Tv\|)&=\frac{1}{3}\Big(\|u-Tu\|+\|v-Tv\|\Big)\\
&=\|u\|+\|v\|,
\end{align*}
and 
\begin{align*}
\|b(u-v)+Tu-Tv\|&=\|b(u-v)-2u+2v\|\\
&=\vert b-2 \vert \Vert u-v \Vert=\frac{3}{4}\Vert u-v \Vert.
\end{align*}
Therefore,  $$\|b(u-v)+Tu-Tv\|\leq f((b+1)\|u-v\|, \|u-Tu\|, \|v-Tv\|)$$ holds for all $u,v\in X$. So $T$ is an enriched $\mathcal{A}$-contraction.  So by Theorem \ref{t4}, $T$ has a unique fixed point. Note that $u\in X$ defined by $u(t)=0$ for all $t\in [0,1]$, is the unique fixed point of $T$
\end{example}
\begin{example}\label{e7}
Let $X=\mathbb{R}$ be the Banach space equipped with usual norm. Define $T:X\to X$ by $Tu=6-u$ for all $u\in X$.

Let us consider the mapping $f:\mathbb{R}_+^3\to \mathbb{R}_+$ defined by $f(r, s, t)=\frac{1}{6}\max\{s,t\}$ for $r, s, t\in \mathbb{R}_+$. Then $f\in \mathcal{A}$. Let $b=1$. Then we have 
$$\|b(u-v)+Tu-Tv\|=0$$ and 
$$f((b+1)\|u-v\|, \|u-Tu\|, \|v-Tv\|)=\frac{1}{6}\max\{\|2u-6\|,\|2v-6\|,$$ which shows that 
$$\|b(u-v)+Tu-Tv\|\leq f((b+1)\|u-v\|, \|u-Tu\|, \|v-Tv\|)$$
for all $u, v\in X$. Therefore, $T$ is an enriched $\mathcal{A}$-contraction on $X$. It is to be noticed that $3$ is the unique fixed point of $T$.
\end{example}
Next, we present another couple of examples, which validate Theorem \ref{t5}.
\begin{example}\label{e8}
Let us take the Banach space $X=C[0,\frac{1}{4}]$, equipped  with sup norm. We define a mapping $T:X\to X$ by $(Tu)(t)=tu(t)$ for all $u\in X$ and for all $t\in [0, \frac{1}{4}]$. We choose $b=\frac{1}{4}$ and $f\in \mathcal{A}'$, defined by $f(r,s,t)=\frac{9}{20}\max\{r,s,t\}$. Then for any $u,v \in X$, we have
\begin{align*}
&\|b(u-v)+Tu-Tv\|\\
&=\sup_{t\in [0, \frac{1}{4}]} |bu(t)-bv(t)+tu(t)-tv(t)|\\
&\leq \sup_{t\in [0, \frac{1}{4}]}|b+t| \sup_{t\in [0, \frac{1}{4}]}|u(t)-v(t)|\\
&=\frac{1}{2}\|u-v\|\\
&\leq \frac{9}{20} (b+1)\|u-v\|\\
&\leq  f((b+1)\|u-v\|, \|(b+1)(u-v)+v-Tv\|, \|(b+1)(v-u)+u-Tu\|).
\end{align*}
Thus $T$ is an enriched $\mathcal{A}'$-contraction. So by  Theorem~\ref{t5}, it follows that $T$ possesses a unique fixed point in $X$ and note the fixed point is $u'$, where $u'(t)=0$ for all $t\in [0,\frac{1}{4}]$.
\end{example}

\begin{example}\label{e9}
Let $X=\mathbb{R}$ be the Banach space equipped with usual norm. Define $T:X\to X$ by $Tu=2-u$ for all $u\in X$.

Let us consider the mapping $f:\mathbb{R}_+^3\to \mathbb{R}_+$ defined by $f(r, s,t)=\frac{s+t}{6}$ for $u, v, w\in \mathbb{R}_+$. Then $f\in \mathcal{A}'$. Let us choose $b=1$. Then we have 
$$\|b(u-v)+Tu-Tv\|=0$$ 
and 
$$f((b+1)\|u-v\|, \|(b+1)(u-v)+v-Tv\|, \|(b+1)(v-u)+u-Tu\|)=\frac{|u-1|+|v-1|}{3}.$$ 
So $$\|b(u-v)+Tu-Tv\|\leq f((b+1)\|u-v\|, \|(b+1)(u-v)+v-Tv\|, \|(b+1)(v-u)+u-Tu\|)$$       
for all $u, v\in X$.
Therefore, $T$ is an enriched $\mathcal{A}'$-contraction on $X$. It is to be noticed that $1$ is the unique fixed point of $T$.
\end{example}

Finally, we study well-posedness and limit shadowing property of fixed point problem for both types of contractions defined in the present paper.

\begin{theorem} \label{t10}
Let $X$ be Banach space and $T$ be an enriched $\mathcal{A}$-contraction. Then the fixed point problem is well posed.
\end{theorem}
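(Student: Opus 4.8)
The plan is to verify the two clauses of Definition~\ref{d10} for the problem associated with $T$. Clause (i), the existence and uniqueness of the fixed point $p$, is already furnished by Theorem~\ref{t4}, so the whole effort goes into clause (ii). The first reduction I would make is to pass from $T$ to its averaged operator $T_\lambda$ with $\lambda=\frac{1}{b+1}\in(0,1]$. Since $u-T_\lambda u=\lambda(u-Tu)$, one has $\|u_n-T_\lambda u_n\|=\lambda\|u_n-Tu_n\|$, so the hypothesis $\|u_n-Tu_n\|\to 0$ is equivalent to $\varepsilon_n:=\|u_n-T_\lambda u_n\|\to 0$; moreover $T_\lambda p=p$. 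Thus it suffices to show that $a_n:=\|u_n-p\|\to 0$ whenever $\varepsilon_n\to 0$, and the contraction is available in the convenient form $(3)$.

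The key inequality I would extract is obtained by the triangle inequality together with $(3)$ applied to the pair $(u_n,p)$ (legitimate on the set of indices where $u_n\neq p$, the only ones that matter):
\begin{align*}
a_n=\|u_n-p\|&\le \|u_n-T_\lambda u_n\|+\|T_\lambda u_n-T_\lambda p\|\\
&\le \varepsilon_n+f\big(\|u_n-p\|,\|u_n-T_\lambda u_n\|,\|p-T_\lambda p\|\big)\\
&=\varepsilon_n+f(a_n,\varepsilon_n,0).
\end{align*}
So everything reduces to the scalar implication: if $a_n\le\varepsilon_n+f(a_n,\varepsilon_n,0)$ with $\varepsilon_n\to0$, then $a_n\to0$.

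For this I would argue by contradiction, and the device that makes it work without any a priori boundedness of $\{a_n\}$ is the superhomogeneity axiom $(\mathcal{A}_3)$. Suppose $a_n\not\to 0$; pick $\delta>0$ and a subsequence $\{a_{n_k}\}$ with $a_{n_k}\ge\delta>0$. Dividing the displayed inequality by $a_{n_k}>0$ and applying $(\mathcal{A}_3)$ with $\lambda=\frac{1}{a_{n_k}}$ to the term $f(a_{n_k},\varepsilon_{n_k},0)$ gives
$$1\le \frac{\varepsilon_{n_k}}{a_{n_k}}+f\Big(1,\tfrac{\varepsilon_{n_k}}{a_{n_k}},0\Big).$$
Since $a_{n_k}\ge\delta$ and $\varepsilon_{n_k}\to0$, the ratio $\varepsilon_{n_k}/a_{n_k}\to0$, and letting $k\to\infty$ with the continuity $(\mathcal{A}_1)$ yields $1\le f(1,0,0)$. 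But $f(1,0,0)$ is exactly the pattern $f(r,s,s)$ with $r=1$, $s=0$, so $(\mathcal{A}_2)$ forces $1\le k\cdot0=0$, a contradiction. Hence $a_n\to0$, which is clause (ii).

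I expect the only genuine obstacle to be this last step: the inequality $a_n\le\varepsilon_n+f(a_n,\varepsilon_n,0)$ does not fit any argument pattern in $(\mathcal{A}_2)$ directly (the third slot is $0$ while the first is $a_n$), and one cannot assume $\{a_n\}$ stays bounded. Normalising by $a_{n_k}$ and invoking $(\mathcal{A}_3)$ is what converts the problem into the clean limiting form $1\le f(1,0,0)$ to which $(\mathcal{A}_2)$ applies; the $b=0$ case is identical with $T_\lambda=T$.
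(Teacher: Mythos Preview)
Your proof is correct and follows the paper's overall strategy: reduce to the averaged operator $T_\lambda$, observe $\|u-T_\lambda u\|=\lambda\|u-Tu\|$, and derive the same key estimate $a_n\le\varepsilon_n+f(a_n,\varepsilon_n,0)$ from the triangle inequality together with $(3)$. The difference lies in how the final scalar implication is handled. The paper simply ``takes the limit'' in this inequality to obtain $\lim_{n}a_n\le f(\lim_{n}a_n,0,0)$ and then applies $(\mathcal{A}_2)$; this tacitly presupposes that $\lim_n a_n$ exists (or at least that $\{a_n\}$ is bounded so that one can pass to a convergent subsequence), a point the paper does not justify. You instead argue by contradiction and normalise by $a_{n_k}$ via the superhomogeneity axiom $(\mathcal{A}_3)$, reaching the clean inequality $1\le f(1,0,0)$ to which $(\mathcal{A}_2)$ applies directly. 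Your route is therefore more rigorous and, interestingly, makes essential use of $(\mathcal{A}_3)$, an axiom the paper's own proof of this theorem never invokes.
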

\begin{proof}
Theorem~\ref{t4} ensures us that $T$ possesses a unique fixed point $p$, say.

It has been shown in Theorem~\ref{t4} that if $b\in (0, \infty)$, then the contraction condition
$$\|b(u-v)+Tu-Tv\|\le f((b+1)\|u-v\|, \|u-Tu\|, \|v-Tv\|)$$
for all $u, v\in X$ with $u\neq v$, can be reduced to the form
$$\|T_\lambda u-T_\lambda v\|\le f(\|u-v\|, \|u-T_\lambda u\|, \|v-T_\lambda v\|)$$
for some $\lambda>0$. In this case it is to be noted that $\|u-T_\lambda u\|=\lambda \|u-Tu\|$ for all $u\in X$.

Therefore, $\displaystyle\mathop{\lim_{n\to \infty}\|u_n-Tu_n\|=0\Longleftrightarrow \lim_{n\to \infty}\|u_n-T_\lambda u_n\|=0}$.

Now let $\{u_n\}$ be a sequence in $X$ such that $\displaystyle\mathop{\lim_{n\to \infty}\|u_n-T_\lambda u_n\|=0}$. Then
\begin{align*}
\|u_n-p\|&\le \|u_n-T_\lambda u_n\|+\|T_\lambda u_n-p\|\\
& =\|u_n-T_\lambda u_n\|+\|T_\lambda u_n-Tp\|\\
&\le \|u_n-T_\lambda u_n\|+f(\|u_n-p\|, \|u_n-T_\lambda u_n\|, \|p-T_\lambda p\|).
\end{align*}
Taking limit as $n\to \infty$, we get
$$\lim_{n\to \infty} \|u_n-p\|\le f(\lim_{n\to \infty} \|u_n-p\|, 0,0).$$
Therefore there exist $k\in [0, 1)$ such that $\displaystyle\mathop{\lim_{n\to \infty}\|u_n-p\|\le k\cdot 0}$ which implies that
$\displaystyle\mathop{\lim_{n\to \infty}\|u_n-p\|=0}$.

Now, if $b=0$, then the contraction condition reduces to
$$\|Tu-Tv\|\le f(\|u-v\|, \|u-Tu\|, \|v-Tv\|)$$
for all $u, v\in X$ with $u\neq v$.

In a similar way, it can be shown that $\displaystyle\mathop{\lim_{n\to \infty}\|u_n-p\|=0}$.
Hence the result follows.
\end{proof}

\begin{theorem} \label{t11}
Let $X$ be Banach space and $T$ be an enriched $\mathcal{A}'$-contraction. Then the fixed point problem is well posed.
\end{theorem}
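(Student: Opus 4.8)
The plan is to mirror the architecture of the proof of Theorem~\ref{t10}, but to call upon the properties of the class $\mathcal{A}'$ that have no analogue for $\mathcal{A}$. First I would invoke Theorem~\ref{t5} to obtain the unique fixed point $p$ of $T$, which settles clause (i) of Definition~\ref{d10}. For clause (ii) I would treat the case $b>0$ first, setting $\lambda=\frac{1}{b+1}\in(0,1)$ and passing to the averaged map $T_\lambda$, for which inequality $(4)$ of Theorem~\ref{t5} supplies
$$\|T_\lambda u-T_\lambda v\|\le f\big(\|u-v\|,\ \|u-T_\lambda v\|,\ \|v-T_\lambda u\|\big)$$
for all $u\ne v$. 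Since $\|u-T_\lambda u\|=\lambda\|u-Tu\|$, the hypothesis $\lim_n\|u_n-Tu_n\|=0$ is equivalent to $\lim_n\|u_n-T_\lambda u_n\|=0$, so it suffices to work with $T_\lambda$ throughout.

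The core estimate comes from the triangle inequality together with $T_\lambda p=p$. Writing $a_n=\|u_n-p\|$ and $b_n=\|u_n-T_\lambda u_n\|\to 0$, I would bound
$$a_n\le \|u_n-T_\lambda u_n\|+\|T_\lambda u_n-T_\lambda p\|\le b_n+f\big(a_n,\ a_n,\ \|p-T_\lambda u_n\|\big),$$
where the substitution $v=p$ turns the second slot $\|u_n-T_\lambda p\|$ into $\|u_n-p\|=a_n$. The third slot $\|p-T_\lambda u_n\|$ does not vanish automatically here, which is precisely what distinguishes the $\mathcal{A}'$ setting from the $\mathcal{A}$ setting; so I would control it by $\|p-T_\lambda u_n\|\le \|p-u_n\|+\|u_n-T_\lambda u_n\|=a_n+b_n$ and invoke the monotonicity property $(\mathcal{A}'_4)$ to replace the third argument, obtaining $a_n\le b_n+f(a_n,a_n,a_n+b_n)$.

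Finally I would take limits. Passing to a subsequence along which $a_n\to L:=\limsup_n a_n$ and using $b_n\to 0$ together with the continuity $(\mathcal{A}'_1)$ of $f$, the displayed inequality yields $L\le f(L,L,L)$, and property $(\mathcal{A}'_6)$ then forces $L=0$, i.e.\ $\lim_n\|u_n-p\|=0$, establishing clause (ii). For the remaining case $b=0$ the contraction already reads $\|Tu-Tv\|\le f(\|u-v\|,\|u-Tv\|,\|v-Tu\|)$, and the identical computation with $T$ in place of $T_\lambda$ applies. I expect the only genuine subtlety to be the third-slot estimate: one must resist assuming $T_\lambda u_n\to p$, since that is essentially the conclusion being sought, and instead feed the crude bound $a_n+b_n$ into $f$ through $(\mathcal{A}'_4)$, after which $(\mathcal{A}'_6)$ closes the argument. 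This is in contrast to Theorem~\ref{t10}, where the self-displacement structure let the second and third arguments tend to $0$ directly, so that only $(\mathcal{A}_2)$ was required.
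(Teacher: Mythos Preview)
Your reduction to $T_\lambda$ and the estimate $a_n\le b_n+f\big(a_n,a_n,\|p-T_\lambda u_n\|\big)$ are correct, but the closing step has a gap: when you pass to a subsequence with $a_{n_k}\to L=\limsup_n a_n$ and invoke continuity of $f$ to obtain $L\le f(L,L,L)$, you have tacitly assumed $L<\infty$. Nothing in $(\mathcal{A}'_1)$, $(\mathcal{A}'_4)$, $(\mathcal{A}'_6)$ alone forces $(a_n)$ to be bounded, so this must be argued separately. (It \emph{can} be done: divide $a_n\le b_n+f(a_n,a_n,a_n+b_n)$ by $a_n$, apply $(\mathcal{A}'_3)$ with $\lambda=1/a_n$ to get $1\le b_n/a_n+f(1,1,1+b_n/a_n)$, and use continuity together with $f(1,1,1)<1$, which follows from $(\mathcal{A}'_6)$, to contradict $a_n\to\infty$. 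But you did not supply this step, and it is not automatic from the properties you invoked.)

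The paper avoids the whole detour. From the line you already have,
\[
\|T_\lambda u_n-p\|\ \le\ f\big(\|u_n-p\|,\ \|u_n-p\|,\ \|p-T_\lambda u_n\|\big),
\]
note that $\|T_\lambda u_n-p\|=\|p-T_\lambda u_n\|$, so this is exactly the pattern $r\le f(s,s,r)$ of $(\mathcal{A}'_2)$ with $r=\|T_\lambda u_n-p\|$ and $s=a_n$. Hence $\|T_\lambda u_n-p\|\le k\,a_n$ for some fixed $k\in[0,1)$, and then $a_n\le b_n+k\,a_n$ yields $a_n\le(1-k)^{-1}b_n\to 0$ directly for every $n$, with no limsup, no $(\mathcal{A}'_4)$, and no $(\mathcal{A}'_6)$. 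Your closing remark that the $\mathcal{A}'$ case genuinely needs more than the analogue of $(\mathcal{A}_2)$ is therefore off the mark: the third slot already carries the very quantity on the left, so $(\mathcal{A}'_2)$ applies the moment you refrain from replacing $\|p-T_\lambda u_n\|$ by the crude bound $a_n+b_n$.
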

\begin{proof}
Theorem~\ref{t5} ensures us that $T$ possesses a unique fixed point $p$, say.

It has been shown in Theorem~\ref{t5} that if $b\in (0, \infty)$, then the contraction condition
$$\|b(u-v)+Tu-Tv\|\le f((b+1)\|u-v\|, \|(b+1)(u-v)+v-Tv\|, \|(b+1)(v-u)+u-Tu\|)$$
for all $u, v\in X$ with $u\neq v$, can be reduced to the form
$$\|T_\lambda u-T_\lambda v\|\le f(\|u-v\|, \|u-T_\lambda v\|, \|v-T_\lambda u\|)$$
for some $\lambda>0$. In this case it is to be noted that $\|u-T_\lambda v\|=\|(u-v)+\lambda (v-Tv)\|$ and $\|v-T_\lambda u\|=\|(v-u)+\lambda(u-Tu)\|$ for all $u, v\in X$.

Therefore, for any sequence $\{u_n\}$ in $X$, we have 
$$\displaystyle\mathop{\lim_{n\to \infty}\|u_n-Tu_n\|=0\Longleftrightarrow \lim_{n\to \infty}\|u_n-T_\lambda u_n\|=0}.$$

Now let $\{u_n\}$ be a sequence in $X$ such that $\displaystyle\mathop{\lim_{n\to \infty}\|u_n-T_\lambda u_n\|=0}$. Then
$$\|u_n-p\|\le \|u_n-T_\lambda u_n\|+\|T_\lambda u_n-p\|.$$
Now,
\begin{align*}
\|T_\lambda u_n-p\|&=\|T_\lambda u_n-T_\lambda p\|\\
&\le f(\|u_n-p\|, \|u_n-T_\lambda p\|, \|p-T_\lambda u_n\|)\\
&=f(\|u_n-p\|, \|u_n-p\|, \|p-T_\lambda u_n\|)
\end{align*}
Then we get a $k\in [0, 1)$ such that $\|T_\lambda u_n-p\|\le k\cdot \|u_n-p\|$.

Therefore, we have
$$\|u_n-p\|\le \|u_n-T_\lambda u_n\|+k\cdot \|u_n-p\|$$
for some $k\in [0, 1)$. This implies that
$$\|u_n-p\|\le \frac{1}{1-k}\|u_n-T_\lambda u_n\|$$
and therefore by taking limit as $n\to \infty$, we have $\displaystyle\mathop{\lim_{n\to \infty}\|u_n-p\|=0}$.

Now, if $b=0$, then the contraction condition reduces to
$$\|Tu-Tv\|\le f(\|u-v\|, \|u-Tv\|, \|v-Tu\|)$$
for all $u, v\in X$ with $u\neq v$.

In a similar way, it can be shown that $\displaystyle\mathop{\lim_{n\to \infty}\|u_n-p\|=0}$.
Hence the result follows.
\end{proof}

\begin{theorem} \label{t12}
Let $X$ be Banach space and $T$ be an enriched $\mathcal{A}$-contraction $($resp. an enriched $\mathcal{A}'$-contraction$)$. Then the fixed point problem involving $T$ possesses limit shadowing property in $X$.
\end{theorem}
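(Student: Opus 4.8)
The plan is to take the witness point demanded by the limit shadowing property to be the unique fixed point of $T$ itself, and then to read off the required convergence directly from the well-posedness results already established. First I would invoke Theorem~\ref{t4} in the enriched $\mathcal{A}$-contraction case, and Theorem~\ref{t5} in the enriched $\mathcal{A}'$-contraction case, to produce the unique fixed point $p\in X$ of $T$.

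The key observation is that $Tp=p$ forces $T^np=p$ for every $n\ge 1$, so that for any sequence $\{u_n\}$ in $X$ the quantity $\|T^np-u_n\|$ reduces to $\|p-u_n\|$. Hence the limit shadowing requirement $\lim_{n\to\infty}\|T^np-u_n\|=0$ is nothing but $\lim_{n\to\infty}\|u_n-p\|=0$. Now, given any sequence $\{u_n\}$ with $\lim_{n\to\infty}\|u_n-Tu_n\|=0$, the well-posedness proved in Theorem~\ref{t10} (respectively Theorem~\ref{t11}) guarantees exactly that $\lim_{n\to\infty}\|u_n-p\|=0$. Combining the two facts yields $\lim_{n\to\infty}\|T^np-u_n\|=\lim_{n\to\infty}\|p-u_n\|=0$, which is the limit shadowing property with witness $p$; the argument is identical in the two cases apart from the theorems cited.

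Since everything rests on the trivial identity $T^np=p$ together with well-posedness, there is essentially no obstacle to surmount: the statement is in effect a corollary of Theorems~\ref{t10} and~\ref{t11}. The one point worth flagging is that $T$ itself need not be a Picard operator, and indeed an enriched contraction may satisfy $T^2=\mathrm{id}$ (as in Example~\ref{e7}), so $T^nu$ generally fails to converge for a typical starting point $u$. This causes no trouble here precisely because we evaluate the orbit at the fixed point $p$, where it is constant; the averaged operator $T_\lambda$ used to run the iteration in the proofs of Theorems~\ref{t4} and~\ref{t5} never enters the limit shadowing statement at all.
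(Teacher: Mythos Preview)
Your proposal is correct and follows essentially the same line as the paper's proof: take the witness to be the unique fixed point $p$, reduce $\|T^np-u_n\|$ to $\|p-u_n\|$ via $T^np=p$, and conclude from the already-established fact that $\|u_n-p\|\to 0$. The only cosmetic difference is that you invoke the well-posedness results (Theorems~\ref{t10} and~\ref{t11}) for the convergence $\|u_n-p\|\to 0$, whereas the paper cites Theorems~\ref{t4} and~\ref{t5}; your citation is in fact the more accurate one, since it is well-posedness that handles an \emph{arbitrary} sequence with $\|u_n-Tu_n\|\to 0$.
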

\begin{proof}
We have already shown in Theorem~\ref{t4} (resp. in Theorem~\ref{t5}) that $\displaystyle\mathop{\lim_{n\to \infty}\|u_n-p\|=0}$, where $p$ is the unique fixed point of $T$. Then for any $n\in \mathbb{N}$, $T^np=p$ and therefore
$$\lim_{n\to \infty} \|u_n-T^np\|=0.$$
That is
$$\lim_{n\to \infty} \|T^np-u_n\|=0.$$
Hence the fixed point problems has limit shadowing property in both the cases.
\end{proof}

\paragraph{\textbf{Acknowledgement}.}
The second named author would like to express his special thanks of gratitude to  CSIR, New Delhi, India, for their financial supports under the CSIR-SRF fellowship scheme (Award Number: $09/973(0018)/2017$-EMR-I).

\end{document}